\journal{Discrete Mathematics}
\newtheorem{thm}{Theorem}
\newtheorem{lem}[thm]{Lemma}
\newenvironment{proof}{\textit{Proof.}}{\hfill $\Box$ \\}
\renewcommand{\llcorner}{\rule{.4pt}{7pt}\rule{6.6pt}{.4pt}}
\renewcommand{\lrcorner}{\rule{6.6pt}{.4pt}\rule{.4pt}{7pt}}
\renewcommand{\ulcorner}{\rule{.4pt}{7pt}\rule[6.6pt]{6.6pt}{.4pt}}
\renewcommand{\urcorner}{\rule[6.6pt]{6.6pt}{.4pt}\rule{.4pt}{7pt}}
\newcommand{\spac}{\hspace{2pt}}
\date{}
\begin{document}

\begin{frontmatter}

\title{Clique coloring $B_1$-EPG graphs}

\author{Flavia Bonomo}
\address{Universidad de Buenos Aires. Facultad de Ciencias Exactas y Naturales. Departamento de Computaci\'on. Buenos Aires,
Argentina. / CONICET-Universidad de Buenos Aires. Instituto de
Investigaci\'on en Ciencias de la Computaci\'on (ICC). Buenos
Aires, Argentina.} \ead{fbonomo@dc.uba.ar}

\author{Mar\'{\i}a P\'{\i}a Mazzoleni}
\address{CONICET and Departamento de Matem\'atica, FCE-UNLP, La Plata,
Argentina.} \ead{pia@mate.unlp.edu.ar}

\author{Maya Stein}
\address{Departamento de Ingenier\'{\i}a Matem\'atica, Universidad de
Chile, Santiago, Chile.}
\ead{mstein@dim.uchile.cl}

\begin{abstract}
We consider the problem of clique coloring, that is, coloring the
vertices of a given graph such that no (maximal) clique of size at
least two is monocolored. It is known that interval graphs are
$2$-clique colorable. In this paper we prove that $B_1$-EPG graphs
(edge intersection graphs of paths on a grid, where each path has
at most one bend) are $4$-clique colorable. Moreover, given a
$B_1$-EPG representation of a graph, we provide a linear time
algorithm that constructs a $4$-clique coloring of it.
\end{abstract}

\begin{keyword}
clique coloring, edge intersection
graphs, paths on grids, polynomial time algorithm. \end{keyword}

\end{frontmatter}

\section{Introduction}

An \textit{EPG representation} $\langle
\mathcal{P},\mathcal{G}\rangle$ of a graph $G$, is a collection of
paths $\mathcal{P}$ of the two-dimensional grid $\mathcal{G}$,
where the paths represent the vertices of $G$ in such a way that
two vertices are adjacent in $G$ if and only if the corresponding
paths share at least one edge of the grid. A graph which has an
EPG representation is called an \textit{EPG graph} (EPG stands for
Edge-intersection of Paths on a Grid). In this paper, we consider
the subclass $B_1$-EPG.
A \textit{B$_1$-EPG representation} of $G$ is an EPG
representation in which each path in the representation has at
most one \textit{bend} (turn on a grid point). Recognizing
$B_1$-EPG graphs is an NP-complete problem~\cite{Heldt}. Also, both the coloring and the maximum independent set problem are NP-complete for $B_1$-EPG graphs~\cite{Epstein}.

EPG graphs have a practical use, for example, in the context of
circuit layout setting, which may be modelled as paths (wires) on
a grid. In the knock-knee layout model, two wires may either cross
or bend (turn) at a common point grid, but are not allowed to
share a grid edge; that is, overlap of wires is not allowed. In
this context, some of the classical optimization graph problems
are relevant, for example, maximum independent set and coloring.
More precisely, the layout of a circuit may have multiple layers,
each of which contains no overlapping paths. Referring to a
corresponding EPG graph, then each layer is an independent set and
a valid partitioning into layers corresponds to a proper coloring.

In this paper, we consider the problem of  \textit{clique
coloring}, that is, coloring the vertices of a given graph such
that no (maximal) clique of size at least two is monocolored.
Clique coloring can be seen also as coloring the clique hypergraph
of a graph. The question of coloring clique hypergraphs was raised
by Duffus et al. in~\cite{Duffus}.

 We prove that $B_1$-EPG graphs are $4$-clique colorable. Moreover,
given a $B_1$-EPG representation of a graph, we provide a linear
time algorithm that constructs a $4$-clique coloring of it.

\section{Preliminaries}
\label{s:preliminares} All graphs considered here are connected,
finite and simple, we follow the notation of~\cite{bondy}. The
vertex set of a graph $G$ is denoted by $V(G)$. A \textit{complete
graph} is a graph that has all possible edges. A \textit{clique}
of a graph $G$ is a maximal complete subgraph of $G$.

A \textit{k-coloring} of a graph $G$ is a function
$f:V(G)\rightarrow \{1,2,\dots,k\}$ such that $f(v)\neq f(w)$ for
adjacent vertices $v,w\in V(G)$. The \textit{chromatic number}
$\chi(G)$ of a graph $G$ is the smallest positive integer $k$ such
that $G$ has a $k$-coloring. A \textit{k-clique coloring} of a
graph $G$ is a function $f:V(G)\rightarrow \{1,2,\dots,k\}$ such
that no clique of $G$ with size at least two is monocolored. A
graph $G$ is \textit{k-clique colorable} if $G$ has a $k$-clique
coloring. The \textit{clique chromatic number} of $G$, denoted by
$\chi_c(G)$, is the smallest $k$ such that $G$ has a $k$-clique
coloring.

Clique coloring has some similarities with usual coloring. For
example, every $k$-coloring is also a $k$-clique coloring,  and
$\chi(G)$ and $\chi_c(G)$ coincide if $G$ is triangle-free. But
there are also essential differences, for example, a clique
coloring of a graph need not be a clique coloring for its
subgraphs. Indeed, subgraphs may have a greater clique chromatic
number than the original graph. Another difference is that even a
$2$-clique colorable graph can contain an arbitrarily large
clique. It is known that the $2$-clique coloring problem is
NP-complete, even under different constraints
\cite{Bacso,Kratochvil}.

Many families of graphs are $3$-clique colorable, for example,
comparability graphs, co-comparability graphs, circular arc graphs
and the $k$-powers of cycles \cite{Campos,Cerioli,Duffus,Duffus2}.
In \cite{Bacso}, Bacs\'o et al.~proved that almost all perfect
graphs are $3$-clique colorable and conjectured that all perfect
graphs are $3$-clique colorable. This conjecture was recently
disproved by Charbit et al.~\cite{Charbit}, who show that there
exist perfect graphs with arbitrarily large clique chromatic
number. Previously known families of graphs having unbounded
clique chromatic number are, for example, triangle-free graphs, UE
graphs (edge intersection graphs of paths in a tree), and line
graphs \cite{Bacso,Cerioli2,Mycielski}.

It has been proved that chordal graphs, and in particular interval
graphs, are $2$-clique colorable~\cite{Poon}. Moreover, the
following result holds for strongly perfect graphs, a superclass
of chordal graphs.

\begin{lem}[Bacs\'o et al.~\cite{Bacso}]\label{l:chordal} Every strongly perfect graph admits a $2$-clique coloring in which one of
the color classes is an independent set.
\end{lem}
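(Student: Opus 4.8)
The plan is to extract the required coloring directly from the defining property of strongly perfect graphs, using essentially nothing beyond that definition. Recall that a graph is strongly perfect if every induced subgraph has a stable (independent) set meeting all of its maximal cliques; in particular, $G$ itself possesses a stable set $S$ that intersects every maximal clique of $G$. I would take $S$ to be the first color class and assign color $2$ to all of $V(G)\setminus S$. Then color class $1$ is the independent set $S$, which already secures the second assertion of the lemma.

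It remains to verify that this two-coloring is a valid clique coloring, i.e.\ that no clique of size at least two is monochromatic. So let $K$ be a clique (a maximal complete subgraph) with $|K|\ge 2$. Because $S$ meets every maximal clique, $K\cap S\neq\emptyset$, so some vertex of $K$ receives color $1$. At the same time $S$ is independent and $K$ is complete, so $S$ and $K$ can share at most one vertex; hence at least $|K|-1\ge 1$ vertices of $K$ lie outside $S$ and receive color $2$. Thus $K$ contains both colors and is not monochromatic, as required. (Cliques of size one impose no condition, so there is no issue in coloring an isolated vertex with color $1$.)

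I do not anticipate a real obstacle: the whole argument rests on the single observation that a stable transversal of the maximal cliques cannot contain an entire clique, so its complement necessarily appears in every clique of size at least two. It is worth noting that the proof uses only the very weak consequence of strong perfection applied to $G$ alone, not the full hereditary statement over all induced subgraphs; the delicate-looking hypothesis is therefore used in the mildest possible way. The only point that genuinely must be invoked is that an independent set and a complete subgraph intersect in at most one vertex, which is exactly what converts ``$S$ meets $K$'' into ``$K$ is bichromatic.''
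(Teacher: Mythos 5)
Your proof is correct: a stable transversal $S$ of the maximal cliques of $G$, which exists by the definition of strong perfection applied to $G$ itself, meets every maximal clique of size at least two in exactly one vertex, so coloring $S$ with color $1$ and $V(G)\setminus S$ with color $2$ gives a $2$-clique coloring whose first color class is independent. This is essentially the standard argument behind the cited result of Bacs\'o et al.; the paper does not reprove the lemma but merely cites it, adding only a constructive linear-time variant (via perfect elimination orderings) for the chordal case that its algorithm actually uses.
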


For chordal graphs, such a coloring can be easily obtained in
linear time, by a slight modification of the $2$-clique coloring
algorithm for chordal graphs proposed in~\cite{Poon}. Namely, let
$v_1,\dots,v_n$ be a perfect elimination ordering of the vertices
of a chordal graph $G$, i.e., for each $i$, $N[v_i]$ is a clique
of $G[\{v_i,\dots,v_n\}]$; color the vertices from $v_n$ to $v_1$
with colors $a$ and $b$ in such a way that $v_n$ gets color $a$
and $v_i$ gets color $b$ if and only if all of its neighbors that
are already colored got color $a$. A perfect elimination ordering
of the vertices of a chordal graph can be computed in linear
time~\cite{R-T-L-chordal}.

\section{B$_1$-EPG graphs are 4-clique colorable} \label{s:our results}

In this section, we prove that $B_1$-EPG graphs are $4$-clique
colorable. We need the following definitions and theorem.

Let $\langle \mathcal{P},\mathcal{G}\rangle$ be a $B_1$-EPG
representation of a graph $G$. A clique $C$ of $G$ is an
\textit{edge clique} of $\langle \mathcal{P},\mathcal{G}\rangle$
if all the paths of $\mathcal{P}$ that correspond to the vertices
of $C$ share a common edge of the grid $\mathcal{G}$. A clique $C$
of $G$ is a \textit{claw clique} of $\langle
\mathcal{P},\mathcal{G}\rangle$ if there is a point $x$ of the
grid and three edges of the grid sharing $x$ (they may be shaped
$\bot$, $\top$, $\vdash$, or $\dashv$), such that each path of
$\mathcal{P}$ that corresponds to a vertex of $C$ contains two of
these three edges, and every pair of these three edges is
contained in at least one path $P$ of $\mathcal{P}$ (so, it is not
an edge clique). We say that the claw clique is \textit{centered
at} $x$, or that $x$ is the \emph{center} of the claw clique. An
example can be seen in Figure~\ref{cliques}.

\begin{figure}[h] \centering{
\includegraphics[width=.5\textwidth]{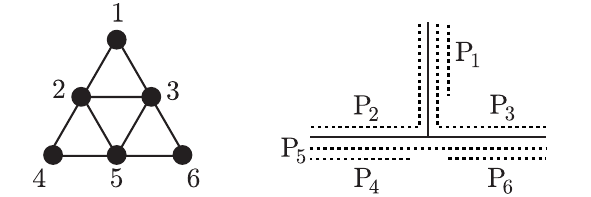}
\caption{A $B_1$-EPG representation of the 3-sun. The central
triangle $\{2, 3, 5\}$ is a claw clique; the other three triangles
are edge cliques (figure from \cite{Golumbic}).}\label{cliques}}
\end{figure}

\begin{thm}[Golumbic et al.~\cite{Golumbic}]\label{t:1} Let $\langle \mathcal{P},\mathcal{G}\rangle$ be a $B_1$-EPG representation of a graph $G$.
Every clique in $G$ corresponds to either an edge clique or a claw
clique in $\langle \mathcal{P},\mathcal{G}\rangle$.\end{thm}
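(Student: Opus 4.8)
The plan is to fix a clique $C$ of $G$ and to show that, unless all the paths representing the vertices of $C$ share a common grid edge (in which case $C$ is an edge clique), the paths must all run through a single grid point in the pattern of a claw. The starting observation is that, since every path has at most one bend, each path $P\in\mathcal{P}$ has a unique \emph{horizontal segment} lying on a single row and a unique \emph{vertical segment} lying on a single column (either of which may be empty). Consequently, whenever two paths of $C$ share a horizontal edge they lie on a common row, and whenever they share a vertical edge they lie on a common column; moreover, for a fixed path $P$, all horizontal edges $P$ shares with other paths lie on $P$'s row and all vertical edges it shares lie on $P$'s column.

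Next I would dispose of the ``unidirectional'' case. If every adjacency inside $C$ is realized by a shared horizontal edge, then all paths of $C$ lie on one common row and their horizontal segments are pairwise overlapping intervals; since intervals on a line satisfy the Helly property, the segments share a common edge and $C$ is an edge clique. The symmetric argument handles the case in which every adjacency is vertical. Hence from now on I may assume that $C$ is realized by a genuine mixture of horizontal and vertical sharings and that no single edge lies in all paths of $C$.

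The heart of the argument is the following three-path localization lemma: if $P_1,P_2,P_3$ are single-bend paths that pairwise share an edge but have no common edge, then there is a grid point $x$ and three grid edges incident to $x$ such that each $P_i$ contains two of them, every pair of the three edges lies in one of the paths, and these realize the three pairwise intersections. To prove it I would set $I_{12}=P_1\cap P_2$, note that it is a nonempty connected subpath disjoint from $P_3$ (otherwise there would be a common edge), and then run a short case analysis on whether each pairwise intersection is horizontal or vertical. Using the single-row/single-column observation together with the Helly property for collinear intervals to exclude the configurations where two intersections share both an orientation and a line, one is forced into the situation where two of the paths bend at a common point $x$ while the third runs straight through $x$; reading off the three edges incident to $x$ that are actually used yields exactly the claw, for instance the $\bot$-shape with $P_1=\{e_L,e_R,\dots\}$, $P_2=\{e_L,e_V,\dots\}$, and $P_3=\{e_R,e_V,\dots\}$.

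Finally I would bootstrap from three paths to all of $C$. Having produced a claw centered at $x$ from some triple in $C$, I take any further path $P\in C$; since $P$ is adjacent to the three paths of the triple but cannot share a single edge with all of them (their common intersection being empty), the same localization forces $P$ to pass through $x$ and to use two of the three claw edges already identified. Collecting all paths of $C$ then shows that $C$ is a claw clique centered at $x$. I expect the main obstacle to be precisely the localization lemma: the delicate point is to exploit the at-most-one-bend hypothesis to rule out every ``wider'' way in which pairwise-intersecting paths could fail the Helly property, and to verify that the three used directions at $x$ genuinely form one of the admissible claw shapes rather than, say, four edges at $x$ or a non-centered configuration.
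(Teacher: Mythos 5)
A preliminary remark: the paper itself does not prove Theorem~\ref{t:1}; it imports it from Golumbic et al.~\cite{Golumbic}. So your proposal is judged on its own merits rather than against a proof in the text. Your architecture (a Helly argument when all sharings are horizontal or all vertical, a three-path localization lemma, then a bootstrap over the clique) can be made to work, and the localization lemma you state is true, with the case analysis you sketch completable. The first genuine gap is the step that produces the triple you feed into that lemma. From ``no single edge lies in all paths of $C$'' you pass to ``some triple in $C$ has no common edge.'' That inference is a Helly-type statement and is false for arbitrary set families: in the family of all $(n-1)$-element subsets of an $n$-element set, any three members intersect, yet the whole family has empty intersection. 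For single-bend paths the inference does hold (the Helly number here is $3$), but that is a lemma of roughly the same weight as your localization lemma and must be proved. Your ``genuine mixture'' reduction only hands you one pair whose intersection is purely horizontal and one pair whose intersection is purely vertical; these yield the desired triple only when the two pairs share a path (a common edge of that triple would have to be simultaneously horizontal and vertical, which is absurd). When the two pairs are disjoint you have four paths and, as written, no argument.

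Second, the bootstrap is not a legitimate application of your lemma. For a further path $P\in C$, every triple containing $P$ may have a common edge --- for instance, in a $\bot$-shaped claw with edges $e_L,e_R,e_V$ at $x$, the path $P$ can contain $e_R$, which is then a common edge of $P$ and the two claw paths through $e_R$ --- and in that situation the lemma says nothing about such a triple. (The emptiness of $P_1\cap P_2\cap P_3$ only tells you that the four paths have no common edge, not that some triple containing $P$ has none.) The statement you actually need --- every single-bend path sharing an edge with each of the three claw paths must contain two of the three claw edges --- is true, but it requires its own geometric case analysis: one argues on whether $P$ meets each bent claw path in its horizontal or vertical arm, and uses the single-bend hypothesis to force $P$ either to bend at $x$ or to run straight through $x$. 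Moreover, in the subcase where some triple containing $P$ does satisfy the lemma's hypothesis, you would still have to show that the claw it produces is centered at the same point $x$ and uses the same three edges, which is not automatic from the lemma's statement. Both gaps are fixable, but they are missing ideas rather than routine omissions.
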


Now we can prove the main result of this paper.

\begin{thm} Let $G$ be a $B_1$-EPG graph. Then, $G$ is 4-clique colorable. Moreover, given a $B_1$-EPG representation of $G$, a 4-clique coloring of $G$ in which one of
the color classes is an independent set can be obtained in linear
time on the number of vertices and edges. \end{thm}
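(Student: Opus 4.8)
The plan is to exploit the fact that a $B_1$-path has one of three shapes --- a horizontal segment, a vertical segment, or an L-shaped path with a single bend --- and to split the adjacencies of $G$ accordingly. Define $G_H$ to be the spanning subgraph of $G$ in which two vertices are joined exactly when their paths share a \emph{horizontal} grid edge, and $G_V$ the analogous graph for \emph{vertical} grid edges; then the edge set of $G$ is the union of the edge sets of $G_H$ and $G_V$. Since the horizontal part of each path lies in a single row, a clique of $G_H$ is confined to one row, where the horizontal parts behave as intervals; hence $G_H$ is a disjoint union of interval graphs, one per row, and likewise $G_V$ is a disjoint union of interval graphs, one per column. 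In particular $G_H$ and $G_V$ are chordal, so Lemma~\ref{l:chordal} applies to each.

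First I would fix, for every row, a $2$-clique coloring of the corresponding interval graph with colors in $\{1,2\}$ whose class $1$ is independent, and combine these into a coloring $\alpha\colon V(G)\to\{1,2\}$ of $G_H$ (vertices with no horizontal part are isolated in $G_H$ and may be given $\alpha=1$); symmetrically I obtain $\beta\colon V(G)\to\{1,2\}$ from $G_V$. The final coloring assigns to $v$ the pair $(\alpha(v),\beta(v))$, giving four colors. The class $(1,1)$ is then independent in $G$, because two of its vertices can share neither a horizontal edge (class $1$ is independent in each row) nor a vertical edge (same in each column); this furnishes the promised independent color class. By Theorem~\ref{t:1} it suffices to check edge cliques and claw cliques. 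If $C$ is an edge clique sharing a horizontal edge, then $C$ is a clique of $G_H$, and it is in fact \emph{maximal} there: any $G_H$-vertex adjacent to all of $C$ is $G$-adjacent to all of $C$, hence lies in $C$ by maximality in $G$. Thus the $2$-clique coloring of the relevant row shows that $C$ is not monochromatic in $\alpha$; vertical edge cliques are handled symmetrically by $\beta$.

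The crux is the claw cliques. Consider a claw clique $C$ centered at $x$; up to symmetry its three edges are two horizontal edges $e_L,e_R$ and one vertical edge $e_U$ (the $\bot$ case), and every vertex of $C$ contains exactly two of them, with all three pairs occurring. The vertices containing $e_U$ form a clique of $G_V$ of size at least two, so if every vertex of $C$ had $\beta=1$ we would contradict independence of class $1$ in that column; hence $C$ is not monochromatic with second coordinate $1$. Applying the same argument to the vertices containing $e_L$ (a clique of $G_H$) rules out first coordinate $1$. What remains --- and this is the main obstacle --- is to exclude a claw all of whose vertices receive $(2,2)$: here the sub-cliques on $e_L$, $e_R$ and $e_U$ need not be \emph{maximal} in their interval graphs, so the previous argument gives nothing. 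To overcome this I would choose each interval $2$-clique coloring canonically (via the right-endpoint perfect elimination ordering used after Lemma~\ref{l:chordal}) and run a case analysis over the four center orientations $\bot,\top,\vdash,\dashv$ and the corner types of the participating paths, using that the straight-through path of a claw has no perpendicular edge at the center and is therefore ``free'' in the other coordinate. The geometric content is that a monochromatic $(2,2)$ claw would force, outside $C$, simultaneous color-$1$ ``blockers'' on $e_L$, $e_R$ and $e_U$ whose interval positions are incompatible under the canonical ordering; ruling this out is where the bulk of the work lies.

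Finally, all ingredients are linear: perfect elimination orderings of the row and column interval graphs are computable in linear time~\cite{R-T-L-chordal}, the two $2$-clique colorings and their combination into the pairs $(\alpha,\beta)$ take linear time, and the case analysis only inspects, for each claw, a bounded neighborhood of its center. Hence a $4$-clique coloring of $G$ with one independent color class is produced in time linear in $|V(G)|+|E(G)|$.
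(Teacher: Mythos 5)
Your first half reconstructs exactly the first phase of the paper's proof: color each row and each column as an interval graph via Lemma~\ref{l:chordal}, combine the two colors into ordered pairs, note that the class $(1,1)$ is independent, that edge cliques can never be monochromatic, and that the only danger is a claw clique all of whose members get $(2,2)$. Up to that point your argument is sound (your explicit maximality argument for edge cliques is correct, and your convention for paths missing a segment is harmless). But the theorem is not proved at that point, and for the one remaining step --- excluding $(2,2)$-monochromatic claw cliques --- you offer only a plan: pick the interval colorings ``canonically'' via a right-endpoint perfect elimination ordering and do a case analysis, admitting that ``this is where the bulk of the work lies.'' That step is the entire content of the theorem, and the mechanism you propose is not just unfinished but appears to be unworkable: no argument that stays inside the space of ``one valid $2$-clique coloring per row and per column, with independent class $1$'' can succeed by merely choosing those colorings well, because such colorings (including ones produced by the right-endpoint greedy algorithm) can leave a claw clique monochromatic.

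Here is a concrete witness. In row $0$ place horizontal segments $A_h=[0,10]$, $B_h=[2,5]$, $C_h=[5,8]$, $D=[1,5]$, $E=[5,9]$; in column $5$ place vertical segments $B_v=[0,3]$, $C_v=[0,2]$, $F=[0,4]$, $G=[2,5]$; in column $10$ place $A_v=[0,2]$, $H=[0,5]$, $I=[2,6]$. The paths are $A=A_h\cup A_v$ (bend at $(10,0)$), $B=B_h\cup B_v$ and $C=C_h\cup C_v$ (bends at $(5,0)$), while $D,E,F,G,H,I$ are straight. Then $\{A,B,C\}$ is a claw clique centered at $(5,0)$, and it is a maximal clique of $G$: $D$ misses $C$, $E$ misses $B$, $F$ and $G$ miss $A$, and $H,I$ miss $B$ and $C$. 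Now color row $0$ by $D,E\mapsto 1$ and $A,B,C\mapsto 2$; column $5$ by $F\mapsto 1$ and $B,C,G\mapsto 2$; column $10$ by $H\mapsto 1$ and $A,I\mapsto 2$. Each of these is a legitimate $2$-clique coloring with independent class $1$, and each is actually output by the greedy algorithm run on a right-endpoint perfect elimination ordering (in row $0$, process the tie between $B_h$ and $D$ by coloring $D$ first). Yet $A$, $B$, $C$ all receive $(2,2)$. Note also that $A$ bends far away from the claw, so your observation that the straight-through path is ``free in the other coordinate'' does not apply. This is precisely why the paper does something structurally different at this point: it accepts that $(a,a)$-monochromatic claw cliques may survive phase one and then runs a second, purely local recoloring phase at each offending grid point, switching one or two bent paths to $(a,b)$ or $(b,a)$ subject to rules (I)--(IV); the real work of the proof is the case analysis (Cases 1--3) showing these switches destroy every monochromatic claw without creating new monochromatic edge or claw cliques. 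After that phase the row and column colorings are no longer of the restricted form you consider (the class-$1$ segments in a row need not be independent any more), so the paper's solution genuinely leaves the search space your proposal is confined to; your proposal contains no substitute for this second phase.
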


\begin{proof}
Let $\langle \mathcal{P},\mathcal{G}\rangle$ be a $B_1$-EPG
representation of the graph $G$. Each path of $\mathcal{P}$ is
composed of either a single segment, formed by one or more edges
on the same row or column of the grid $\mathcal{G}$, or of two
segments sharing a point of the grid, one horizontal (i.e., in a
row) and one vertical (i.e., in a column). We will first assign
colors independently to the horizontal and vertical segments of
each path, and then we will show how to combine those colors into
a single color for each path, as required.

First, we use Lemma~\ref{l:chordal} to color the segments on each row and each column of
$\mathcal{G}$ as if they were vertices of an interval graph, with
two colors $a$ and $b$, such that the segments colored~$b$ form an
independent set, i.e., a pairwise non intersecting set, on each
row (respectively column).

We thus obtain four types of paths according to the colors given to
their corresponding segments: $(a,a)$, $(a,b)$, $(b,a)$, $(b,b)$,
where the first component corresponds to the horizontal segment of
the path and the second component corresponds to the vertical segment of the
path; if one of these parts does not exist, we assign an $a$ to the
missing component.

Observe that
\begin{equation}\label{bbindependent}
\text{the color class $(b,b)$ is an independent set.}
\end{equation}

Let us now investigate which cliques could be monocolored.
Edge cliques of $\langle \mathcal{P},\mathcal{G}\rangle$ are also
cliques of the interval graph corresponding to the row of the
grid, respectively column of the grid, to which the edge (where
all the paths of the clique intersect) belongs. Thus, the colors
of the paths in such a clique have to be different in the
horizontal component (respectively in the vertical component),
that is, the clique is not monocolored.

Let us now turn to the claw cliques. Suppose that there is a claw
clique which is monocolored. Then this clique contains at least
two paths whose horizontal  segments overlap, and have the same
color, and the same is true for vertical segments.  Since  the
horizontal  segments  colored $b$ form an independent set, and the
same is true for the vertical segments, the only possible coloring
of the paths in our monocolored claw clique is $(a,a)$.

Now,  for each point $x$ of the grid which is the center of
one or more claw cliques monocolored $(a,a)$, we will perform a
recoloring of one or two paths having a bend at $x$. In this way,
each path will be recolored at most once, as it has at most one
bend. Paths without bends will not be recolored.

The order in which we process the points $x$ of the grid does not matter: The recolorings are independent of recolorings at other grid points. In the recoloring we will assign color $b$ to some segments
that were originally colored $a$, obeying the following rules, for any fixed point $x$ of the grid:

\begin{enumerate}[(I)]
\item the recolored paths either get color $(a,b)$ or $(b,a)$,\label{I}
\item every segment of a path with a bend at $x$ that is
recolored $b$ is contained in a segment of a path with a bend
at $x$ that is colored $a$; \label{Ix}
\item if we recolor two paths with a bend at $x$, they only share $x$ (i.e., they are shaped $\llcorner$ and $\urcorner$ or $\ulcorner$ and $\lrcorner$); \label{II}
\item \label{III} after recoloring, there is no claw clique colored $(a,a)$ centered at $x$.
\end{enumerate}

It will be explained below how to construct a recoloring obeying
rules (\ref{I}) -- (\ref{III}). Once such a recoloring is found,
the segments colored $b$ may no longer be an independent set, but
properties~(\ref{I}) -- (\ref{II}) prevent us from creating new
monocolored cliques. Indeed, property~(\ref{Ix}) ensures that we
create no monocolored edge clique. We claim that
properties~(\ref{I}) --~(\ref{II}) guarantee we have no new
monocolored claw clique. Assume instead that a claw clique
centered at a grid point $x$ gets monocolored after the process,
and by symmetry assume it is shaped $\bot$. By (\ref{I}), it is
either monocolored $(a,b)$ or $(b,a)$. In the first case, the
vertical segment of one of the paths having a bend at $x$, let us
say $P$, has to be recolored, because the segments that were
originally colored $b$ formed an independent set. Property
(\ref{Ix}) implies that there is a path $Q$ having a bend at $x$
and whose vertical segment contains the vertical segment of $P$
and is colored $a$; This leads to a contradiction, because by
maximality, $Q$ belongs to the clique. In the second case, since
by (\ref{II}) at most one of the paths of the clique that have a
bend at $x$ was recolored, and the segments that were originally
colored $b$ formed an independent set, the horizontal segments of
all the paths that belong to the clique and do not have a bend at
$x$ were recolored $b$. Property (\ref{Ix}) implies that there is
a path belonging to the clique whose horizontal segment is colored
$a$, a contradiction as well. These observations and
property~\eqref{III} ensure that after going through all grid
points, we have found a $4$-clique coloring of $G$. By
\eqref{bbindependent} and by Property~\eqref{I}, the coloring has
an independent color class.

\smallskip

Let us now explain how we find the recoloring with properties
(\ref{I}), \eqref{Ix}, (\ref{II}) and (\ref{III}), for a fixed
grid point $x$. We distinguish three cases. Let us say a shape is
{\it missing} at $x$ if either there is no path of this shape with
a bend at $x$ or there is at least one path of this shape with a
bend at $x$ that is not colored $(a,a)$.
\medskip

\textbf{Case 1:}
Two or more of the shapes $\lrcorner$\spac,  $\llcorner$\spac, $\urcorner$\spac, $\ulcorner$  are missing at $x$.

\smallskip
If there is no $(a,a)$-colored claw clique centered at $x$, we do not recolor anything. Clearly,~\eqref{I} --~\eqref{III} hold.
Otherwise, there is a unique $(a,a)$-colored claw clique at $x$, and symmetry allows us to assume this clique is shaped $\bot$.
Both shapes $\urcorner$  and  $\ulcorner$ are missing at $x$.
Of all $\lrcorner\spac$- or $\llcorner\spac$-shaped paths with bend at~$x$, choose the one with the shortest vertical segment, and recolor it $(a,b)$. Then~\eqref{I} --~\eqref{III} hold.

\medskip

\textbf{Case 2:}
Exactly one of the shapes $\lrcorner$\spac,  $\llcorner$\spac, $\urcorner$\spac, $\ulcorner$  is missing at $x$.

\smallskip
By symmetry, we can assume $\ulcorner$  is missing at $x$.
Let $\mathcal P$ be the set of  all paths with bend at $x$ that have the shape $\lrcorner$.
If there is a path $P\in \mathcal P$ whose horizontal segment is contained in another path with bend at $x$, then recolor $P$ with $(b,a)$. Otherwise, if there is a path $P$ in $\mathcal P$ whose vertical segment is contained in another path with bend at $x$, then recolor $P$ with $(a,b)$. In both cases, the choice of $P$ ensures that~\eqref{I} --~\eqref{III} hold.

It remains to consider the case that for each of the paths in $\mathcal P$, their horizontal (vertical) segment strictly contains all horizontal (vertical) segments of paths with bend at $x$ (in particular, $|\mathcal P|=1$). Then, choose any $\llcorner\spac$-shaped path $P_1$  and any $\urcorner\spac$-shaped  path $P_2$ with bend at $x$, recolor $P_1$ with $(a,b)$ and $P_2$ with $(b,a)$ and observe  that~\eqref{I} --~\eqref{III} hold by the choice of $P_1$ and $P_2$.
\medskip

\textbf{Case 3:}
None of the shapes $\lrcorner$\spac,  $\llcorner$\spac, $\urcorner$\spac, $\ulcorner$  is missing at $x$.

\smallskip
Consider the shortest of all segments of paths with bend at $x$ (or one of them if there is more than one), and let $Q$ be the path it belongs to. By symmetry, we may assume~$Q$ is shaped $\ulcorner$, and the shortest segment is the horizontal segment. As in the previous case, let
$\mathcal P$ be the set of  all $\lrcorner\spac$-shaped paths with bend at $x$. If there is a path $P\in\mathcal P$ whose horizontal (vertical) segment is contained in another path with bend at $x$, then recolor $P$ with $(b,a)$ (or with $(a,b)$, respectively), and recolor $Q$ with $(b,a)$. The choice of $P$ and $Q$ guarantees~\eqref{I}--~\eqref{III}.

Otherwise, for each of the paths in $\mathcal P$, their horizontal (vertical) segment strictly contains all horizontal (vertical) segments of paths with bend at $x$. Choose any $\llcorner\spac$-shaped path $P_1$  and any $\urcorner\spac$-shaped  path $P_2$ with bend at $x$, and recolor $P_1$ with $(a,b)$ and $P_2$ with $(b,a)$ ($Q$ is not recolored in this case). Again,~\eqref{I} --~\eqref{III} hold.

\medskip

The presented algorithm gives a $4$-clique coloring of $G$, where
one color class is an independent set. The algorithm can be implemented
to run in linear time in the number of vertices and edges of $G$.
\end{proof}

\section{Conclusion}

In this paper we have proved that $B_1$-EPG graphs are $4$-clique
colorable, and that such coloring can be obtained in linear time
in the number of vertices and edges of the graph, given a
$B_1$-EPG representation of it. This algorithm may use four colors
in graphs that are in fact 2-clique colorable or 3-clique
colorable.

We conjecture that indeed $B_1$-EPG graphs are $3$-clique
colorable. Examples of $B_1$-EPG graphs that require three colors
for a clique coloring are the odd chordless cycles, and we could
not find examples of $B_1$-EPG graphs having clique chromatic
number 4 (The Mycielski graph with chromatic number 4, line graphs
of big cliques and the graph in \cite{Charbit} having clique
chromatic number 4 are not $B_1$-EPG).

Further open questions are the computational complexity of
2-clique coloring and 3-clique coloring on $B_1$-EPG graphs. \\

\bigskip

\noindent
\textbf{Acknowledgements:} This work was partially supported by UBACyT Grant
    20020130100808BA, CONICET PIP 122-01001-00310 and
    112-201201-00450CO, and ANPCyT PICT 2012-1324 and 2015-2218 (Argentina),
Fondecyt grant 1140766 and ``Nucleo Milenio Informaci\'on y
Coordinaci\'on en Redes'' ICM/FIC P10-024F (Chile), and MathAmSud
Project 13MATH-07 (Argentina--Brazil--Chile--France).

\end{document}